\title{A remark on post-critically finite compositions of polynomials}
\author{Benjamin Fraser and Patrick Ingram}
\address{York University, Toronto, Canada}
\renewcommand{\epsilon}{\varepsilon}
\renewcommand{\phi}{\varphi}
\newcommand{\ZZ}{\mathbb{Z}}
\newcommand{\CC}{\mathbb{C}}
\newcommand{\QQ}{\mathbb{Q}}
\newtheorem{lemma}{Lemma}
\newtheorem{theorem}[lemma]{Theorem}
\theoremstyle{definition}
\begin{document}

\begin{abstract}
The second author proved that the set of post-critically finite polynomials of given degree is a set of bounded height, up to change of variables. Motivated by an observation about unicritical polynomials, we complement this by proving that the set of monic polynomials $g(z)\in \overline{\QQ}[z]$ of given degree with the property that there exists a $d\geq 2$ such that $g(z^d)$ is post-critically finite,  is also a set of bounded height. Moreover, we establish a lower bound on the critical height of $g(z^d)$.
\end{abstract}
\maketitle

A polynomial $f(z)\in \CC[z]$ is \emph{post-critically finite (PCF)} if and only if the orbit of every critical point is finite, and in this case one can show that $f$ is defined over $\overline{\QQ}$, after a change of variables. The first author showed that the set of PCF polynomials of given degree is a set of bounded height~\cite{pcf}, depending strongly on the degree. So for example, for each $d\geq 2$ and each number field $K$ there exist only finitely many $c\in K$ such that $z^d+c$ is PCF. In this very special case, though, one can do much better, and with elementary tools. If $z^d+c$ is PCF, then it is easy to show that $h(c)\leq \frac{\log 2}{d-1}$, and so in particular there are only finitely many $c\in K$ such that \emph{there exists a $d\geq 2$} with $z^d+c$ PCF. This note generalizes this phenomenon from unicritical polynomials, viewed as post-compositions of a power map by a linear map, to arbitrary polynomial post-compositions of a power map, including a lower bound on the critical height.
\begin{theorem}\label{th:main}
	For monic polynomials $g(z)\in\overline{\QQ}[z]$ and $d\geq 2$, we have
\[\hat{h}_{\mathrm{crit}}(g(z^d))\geq \frac{1}{d\deg(g)}\Big(h(g)+O(1)\Big),\]
where the implied constant depends just on $\deg(g)$. In particular, the set of monic $g(z)\in\overline{\QQ}[z]$ of given degree such that there exists a $d\geq 2$ with $g(z^d)$ PCF is a set of bounded height.
\end{theorem}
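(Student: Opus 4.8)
The plan: write $f:=g(z^d)$, $n:=\deg g$, $D:=dn=\deg f$, and $g=z^n+a_{n-1}z^{n-1}+\dots+a_0$. The first move is to make $\hat h_{\mathrm{crit}}(f)$ explicit. Since $f'(z)=dz^{d-1}g'(z^d)$, the critical points of $f$ are $z=0$ (with multiplicity $d-1+d\cdot\ord_{z=0}g'$) together with the $d$ preimages under $z\mapsto z^d$ of each critical point $w$ of $g$ (each with multiplicity $\ord_{z=w}g'$). Using $\hat h_f(c)=\tfrac1D\hat h_f(f(c))$ with $f(0)=g(0)$ and $f(\zeta)=g(w)$ whenever $\zeta^d=w$, all of these collapse to the identity
\[ \hat h_{\mathrm{crit}}(f)=\frac{d-1}{dn}\,\hat h_f\!\big(g(0)\big)+\frac1n\sum_{w}\hat h_f\!\big(g(w)\big), \]
the sum over the $n-1$ critical points $w$ of $g$ counted with multiplicity. (Equivalently this falls out of the local Lyapunov identity $\sum_{f'(c)=0}G_{f,v}(c)=\int\log|f'|\,d\mu_{f,v}-\log|D|_v$: push $\mu_{f,v}$ forward under $z\mapsto z^d$ to get the maximal measure of $g(z)^d$, which is semiconjugate to $f$ via $g$.)

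The second move is to bound $\hat h_f$ from below by the Weil height. An escape-rate estimate shows that for each place $v$ the filled Julia set of $f$ lies in a disc whose radius $\rho_v$ satisfies $\log^+\rho_v\le\max_i\frac{\log^+|a_i|_v}{d(n-i)}+O_n(1/d)$, the error supported at $v\mid\infty$. Summing the local inequalities $G_{f,v}(x)\ge\log^+|x|_v-\log^+\rho_v$ and applying the product formula gives, for all $x\in\overline{\QQ}$,
\[ \hat h_f(x)\ \ge\ h(x)-\tfrac1d R(g)-O_n(1/d),\qquad R(g):=\sum_v w_v\max_i\frac{\log^+|a_i|_v}{n-i}, \]
where $\tfrac1n h(g)\le R(g)\le h(g)$. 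It is crucial that the factor in front of $R(g)$ is $1/d$ and that $D\cdot O_n(1/d)=O_n(1)$.

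The heart of the matter is the assertion that $g$ is recovered, up to bounded height, from $g(0)$ and its critical values. Here the relevant map is $\Phi\colon\AA^n\to\AA^n$ sending $(a_0,\dots,a_{n-1})$ to $g(0)$ together with the coefficients of $\Res_z\big(g'(z),g(z)-t\big)$, i.e. the elementary symmetric functions of the critical values of $g$. This $\Phi$ is a \emph{finite} morphism: it is equivariant for the $\mathbb G_m$-actions $a_i\mapsto\lambda^{n-i}a_i$ on the source and the induced positively graded action on the target, and its only point over the fixed point
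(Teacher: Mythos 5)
Your proposal takes a genuinely different route from the paper (which argues place-by-place: either some branch point of $f=g(z^d)$ lies in the escape region, or else all branch points of $f$ are $v$-adically small, in which case $|g(0)|_v$ small forces $\|\mathbf{a}\|_v\asymp\|\mathbf{c}\|_v$ and the main theorem of~\cite{pcf}, applied to $\frac{1}{m}(g(z)-g(0))$, produces a large critical value and hence a contradiction unless $\|\mathbf{a}\|_v$ is bounded). Your first two steps are sound, modulo one normalization caveat: the identity you write computes $\sum_{f'(c)=0}m_c\,\hat{h}_f(c)$, whereas this paper's $\hat{h}_{\mathrm{crit}}$ is built from $\lambda_{\mathrm{crit},v}=\max_c G_{f,v}(c)$; passing from your sum to the max costs a factor $\deg(f)-1=dn-1$, which still yields a bound of the required shape but should be acknowledged. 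The escape-rate estimate $\hat{h}_f(x)\ge h(x)-\tfrac1d R(g)-O_n(1/d)$ is correct and the observation that the loss scales like $1/d$ is indeed the right thing to exploit.

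The genuine gap is that the argument stops exactly at its self-declared heart. The finiteness of $\Phi$ is only begun (the sentence ends at ``its only point over the fixed point''); the intended claim, that $\Phi^{-1}(0)=\{0\}$ because a monic $g$ with $g(0)=0$ and all critical values zero must be $z^n$, is true and the graded-finiteness argument works, but you must then extract the \emph{quantitative} consequence: a height inequality of the form $h(\mathbf{a})\le C_n\big(h(g(0))+\sum_w h(g(w))\big)+O_n(1)$ via weighted-homogeneous integral dependence relations. This step, which is what replaces the appeal to~\cite{pcf} in the paper's proof, is entirely absent. Moreover, the final assembly is not routine: after substituting the escape-rate bound into your identity, the main term is roughly $\frac{d-1}{d}R(g)$ while the loss is roughly $\frac{dn-1}{d^2n}R(g)$, and for $d=2$ the margin between these is only $\frac{1}{4n}R(g)$. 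To see that the margin is positive at all, you need the finiteness input in the sharper weighted form comparing $R(g)$ itself (not merely $h(g)$) to the heights of $g(0)$ and the critical values; with the crude comparison $R(g)\le h(g)$ the main term fails to dominate when $d=2$. Until the finiteness step is completed and this bookkeeping is carried out, the proof is not done.
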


The main result of~\cite{pcf} is a similar lower bound on the critical height for polynomials of fixed degree (i.e., the $d=1$ case of Theorem~\ref{th:main}), but the height bound there is only up to change of coordinates, since $f(z+a)-a$ will be PCF whenever $f$ is. In contrast, a monic polynomial of the form $g(z^d)$ is only conjugate to finitely many other polynomials of the same form, when $d\geq 2$ (since the change of variables would need to move $z=0$ to another critical point).

It is well known that there are no algebraic families of PCF polynomials in characteristic zero, and the tools in~\cite{pcf} allow one to extend this result to characteristic $p$, but only for polynomials of degree less than $p$ (see also work of Levy~\cite{levy}). Indeed, the family $z^{p^m}+tz$ is PCF in characteristic $p$, for any $m$ as long as $t\neq 0$, so one cannot hope to completely remove the condition on the degree. One can, however, obtain results in the flavour of Theorem~\ref{th:main}.

\begin{theorem}\label{th:rigid}
	Let $k$ be an algebraically closed field of characteristic $p>0$. Then there are no non-constant PCF algebraic families of the form $g(z^d)$ with $d\geq 2$, and $\deg(g)<p$.
\end{theorem}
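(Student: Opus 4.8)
The plan is to run the argument behind Theorem~\ref{th:main} over a function field of characteristic $p$, where the absence of archimedean places makes it sharp enough to force rigidity. By passing to a general curve in the base of the family and to its smooth projective model, it suffices to preclude the following: a smooth projective curve $V/k$ with function field $K=k(V)$, and a monic $g(z)\in K[z]$ with $\deg g=n$, $1\le n<p$, such that $g\notin k[z]$ while $f(z):=g(z^d)$ is PCF over $K$ (meaning every critical point of $f$ is preperiodic). A non-constant PCF family has a non-constant PCF generic fibre, and since $V$ is a smooth projective curve, $g\in k[z]$ exactly when every coefficient of $g$ is regular on all of $V$. I would show that the PCF hypothesis forces $f$ to have good reduction at every place $v$ of $K$; then every coefficient of $g$ is everywhere regular on $V$, hence constant, contradicting $g\notin k[z]$.

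First I would reduce to $p\nmid d$. Writing $d=p^ad'$ with $p\nmid d'$, one has $g(z^d)=\tilde g(z^{d'})^{p^a}=\varphi^a\circ\tilde g(z^{d'})$ with $\varphi(w)=w^p$ and $\tilde g$ the monic degree-$n$ polynomial obtained from $g$ by extracting $p^a$-th roots of coefficients ($k$ being perfect); since post-composing with the purely inseparable $\varphi$ leaves post-critical finiteness unchanged and $\tilde g\in k[z]\iff g\in k[z]$, this leaves either the case $p\nmid d$ (when $d'\ge2$) or the rigidity of degree-$n$ PCF families with $n<p$, which follows from the methods of~\cite{pcf} (when $d'=1$). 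So assume $p\nmid d$. Then $f'(z)=dz^{d-1}g'(z^d)$ has degree $dn-1$ (using $p\nmid d$ and $n<p$), so $f$ is separable of degree $dn$; moreover $p\nmid d$ and $n<p$ make all ramification indices of $f$ prime to $p$, so $f$ is tamely ramified. Being PCF, $f$ has every critical point of finite, hence bounded, orbit at every place $v$ of $K$ --- equivalently, every critical point of $f$ lies in the filled Julia set $\mathcal K_{f,v}$ of $f$ at $v$.

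The theorem then reduces to the following claim, which plays the role in characteristic $p$ of the local estimates underlying Theorem~\ref{th:main}: \emph{if a monic, tamely ramified polynomial $\psi$ over a non-archimedean field has all of its critical points in its filled Julia set, then $\psi$ has good reduction}. Granting it, applying the claim to $f$ at every $v$ yields good reduction everywhere, and we are done. To prove the claim --- this is the step where tameness, hence $\deg g<p$, is essential --- suppose $\psi=z^D+\sum_{j<D}a_jz^j$ has bad reduction, so that its house $\rho:=\max_{j<D}|a_j|_v^{1/(D-j)}$ exceeds $1$. Then $\mathcal K_{\psi,v}\subseteq\overline{D(0,\rho)}$, while $\psi^{-1}\bigl(\overline{D(0,\rho)}\bigr)$ is a proper subset of $\overline{D(0,\rho)}$. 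If all critical points of $\psi$ lay in $\mathcal K_{\psi,v}$ then, since $\psi^{-1}(\mathcal K_{\psi,v})=\mathcal K_{\psi,v}$, they would lie in $\psi^{-k}\bigl(\overline{D(0,\rho)}\bigr)$ for every $k$; the non-archimedean Riemann--Hurwitz formula --- which holds with the classical Euler characteristic precisely because $\psi$ is tamely ramified --- then forces each $\psi^{-k}\bigl(\overline{D(0,\rho)}\bigr)$ to be a single closed disc, and these discs shrink strictly. Their intersection $\mathcal K_{\psi,v}$ is therefore a closed disc, necessarily of logarithmic capacity $1$ since $\psi$ is monic, hence of radius $1$; but a monic polynomial of degree $D\ge2$ can map a radius-one disc onto itself only if that disc is $\overline{D(0,1)}$, i.e.\ only if $\psi$ has good reduction --- a contradiction.

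The main obstacle, then, is this last claim: verifying that the non-archimedean Newton-polygon and Riemann--Hurwitz analysis behind the good-reduction criterion underlying the critical-height estimates of~\cite{pcf} goes through verbatim in characteristic $p$ once the hypothesis $\deg g<p$ (together with the reduction to $p\nmid d$) has made all the ramification of $f$ tame. Everything else --- the reduction to a curve, the Frobenius treatment of $p\mid d$, and the deduction of constancy from good reduction at all places --- is comparatively routine.
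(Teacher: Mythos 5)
Your route is genuinely different from the paper's. The paper simply equips $k(V)$ with a product-formula set of absolute values (citing \cite{pern}), observes that every error constant in Lemma~\ref{lem:mainbound} vanishes at such places (they are non-archimedean and trivial on the prime field, and $\deg g<p$ makes $|m|_v=1$), and concludes from $\lambda_{\mathrm{crit},v}=0$ that $\|\mathbf{a}\|_v\le 1$ at every place, so the roots of $g$ have height zero and are constant. You instead re-derive the needed local input from scratch as a good-reduction criterion via tame non-archimedean Riemann--Hurwitz, and then deduce constancy from everywhere-good reduction on a projective model; your Frobenius reduction to $p\nmid d$ also treats a degenerate case ($f'\equiv 0$) that the paper's chain-rule computation of the branch locus quietly assumes away. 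That is a legitimate and self-contained alternative for the main branch $p\nmid d$, $d\ge 2$.

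There is, however, one genuine error: your key local claim --- monic, tame, all critical points in the filled Julia set implies good reduction --- is false as stated. Good reduction of a monic polynomial is not invariant under conjugation by translations, and your own argument only produces an invariant closed disc $\overline{D}(\gamma,1)$ of capacity one; nothing forces $\gamma=0$. Concretely, take $\psi(z)=(z-c)^2+z=z^2+(1-2c)z+c^2$ over $\CC_p$ with $p$ odd and $|c|_v>1$: it is monic and tame, its filled Julia set is exactly $\overline{D}(c,1)$, its critical point $c-\tfrac12$ lies in that disc, yet its coefficients are not integral. So the sentence ``a monic polynomial of degree $D\ge2$ can map a radius-one disc onto itself only if that disc is $\overline{D}(0,1)$'' is wrong, and without it your final step (coefficients of $g$ regular at every place, hence constant) does not follow. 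The gap is repairable precisely because of the special shape $f=g(z^d)$: once $p\nmid d$ and $d\ge 2$, the point $z=0$ is a critical point of $f$, so it lies in $\mathcal K_{f,v}=\overline{D}(\gamma,1)$, forcing $|\gamma|_v\le 1$ and hence $\overline{D}(\gamma,1)=\overline{D}(0,1)$; you need to say this. (This is, in effect, the same structural fact the paper exploits when it notes that a polynomial of the form $g(z^d)$ has $g(0)$ as a distinguished branch point.) With that one observation added, and granting the standard facts you invoke (preimages of closed discs are finite unions of closed discs, tame Riemann--Hurwitz for such coverings, capacity of the filled Julia set of a monic polynomial equals one), your argument goes through.
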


One must specify what one means by a PCF algebraic family, since it is of course the case that any rational function defined over $\overline{\mathbb{F}_p}$ is PCF. By a PCF algebraic family we mean  a family parametrized by an irreducible quasi-projective variety, whose generic fibre is PCF. It is easy to check that this is equivalent to saying that the postcritical sets of the specializations are of uniformly bounded size.	
%
%Finally, as an application of the general idea of the proof, we establish the following.
%\begin{theorem}\label{th:example}
%Suppose that $a, b\in\QQ$, $d\geq 2$, and that $z^{2d}+az^d+b$ is PCF. Then either $a=b=0$; $a=0$, $b=-1$, and $d$ is even; or
%\[(d, a, b)\in \{\}.\]	
%\end{theorem}

\section{Local considerations}

In this section we fix $m\geq 2$, and work over a field $K$ of characteristic $0$ or $p>m$, equipped with a set $M$ of absolute values $|\cdot|_v$, extended in some way to the algebraic closure of $K$. For any tuple $x_1, ..., x_m$ we set $\mathbf{x}=x_1, ..., x_m$ and 
\[\|\mathbf{x}\|=\|x_1, ..., x_n\|_v=\max\{|x_1|_v, ..., |x_n|_v\},\]
and we concatenate tuples with a comma.

We also consider monic polynomials $g(z)$ of degree $m$, and write
\[g(z)=(z-a_1)\cdots(z-a_m)\]
and
\[g'(z)=m(z-c_1)\cdots(z-c_{m-1})\]
over $\overline{K}$.

\begin{lemma}
	We have
	\begin{equation}\label{eq:ac}\Big|\log\|\mathbf{a}\|_v-\log\|\mathbf{c}, g(0)^{1/m}\|_v\Big|\leq C_{1, v},\end{equation}
	where $C_{1, v}=0$ when $v$ is non-archimedean, or some fixed non-negative value depending just on $m$ otherwise.
\end{lemma}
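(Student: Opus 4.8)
The plan is to reduce~\eqref{eq:ac} to the standard comparison between the size of the roots of a monic polynomial and the size of its coefficients. For a finite tuple $\mathbf{x}=x_1,\dots,x_n$, write $e_k(\mathbf{x})$ for the $k$-th elementary symmetric function and set $\rho_v(\mathbf{x})=\max_{1\le k\le n}|e_k(\mathbf{x})|_v^{1/k}$. The first step is the auxiliary estimate
\[\big|\log\|\mathbf{x}\|_v-\log\rho_v(\mathbf{x})\big|\le C_v,\]
valid for every tuple of length at most $m$, with $C_v=0$ when $v$ is non-archimedean and $C_v$ depending only on $m$ otherwise. When $v$ is non-archimedean this is in fact an equality: if exactly $j$ of the $x_i$ attain the maximum modulus $R=\|\mathbf{x}\|_v$, then the monomial of $e_j(\mathbf{x})$ built from those $j$ indices has absolute value $R^j$ and strictly dominates every other monomial, so $|e_j(\mathbf{x})|_v=R^j$, while the ultrametric inequality gives $|e_k(\mathbf{x})|_v\le R^k$ for all $k$. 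When $v$ is archimedean, the triangle inequality gives $|e_k(\mathbf{x})|_v\le\binom{n}{k}\|\mathbf{x}\|_v^k$, and, conversely, writing $x^n=\sum_{k\ge1}(-1)^{k+1}e_k(\mathbf{x})x^{n-k}$ for a root $x$ of maximal modulus and summing a geometric series yields a Fujiwara-type bound $\|\mathbf{x}\|_v\le 2\rho_v(\mathbf{x})$.

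The second step is the algebraic identity linking the coefficients of $g$ and $g'$. Comparing $g(z)=\prod_i(z-a_i)$ with $g'(z)=m\prod_l(z-c_l)$ coefficient by coefficient gives $(m-k)\,e_k(\mathbf{a})=m\,e_k(\mathbf{c})$ for $1\le k\le m-1$, and evaluating at $z=0$ gives $|e_m(\mathbf{a})|_v=|g(0)|_v$. Since the characteristic is $0$ or $p>m$, the integers $m,m-1,\dots,1$ are units, so $|m/(m-k)|_v=1$ for $v$ non-archimedean and $1\le|m/(m-k)|_v\le m$ for $v$ archimedean. Splitting off the $k=m$ term, $\rho_v(\mathbf{a})$ is $\max(\rho_v(\mathbf{c}),|g(0)|_v^{1/m})$ up to a factor that is $1$ in the non-archimedean case and lies in $[1,m]$ in the archimedean case.

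Combining the two steps finishes the proof. For $v$ non-archimedean, the auxiliary equality (applied to $\mathbf{a}$ of length $m$ and to $\mathbf{c}$ of length $m-1$) and the second step give $\|\mathbf{a}\|_v=\rho_v(\mathbf{a})=\max(\rho_v(\mathbf{c}),|g(0)|_v^{1/m})=\max(\|\mathbf{c}\|_v,|g(0)|_v^{1/m})=\|\mathbf{c},g(0)^{1/m}\|_v$, so the left side of~\eqref{eq:ac} vanishes. For $v$ archimedean the same chain holds up to a multiplicative constant depending only on $m$, and taking logarithms gives~\eqref{eq:ac} with $C_{1,v}$ depending only on $m$. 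The only degenerate case is $g(z)=z^m$, where $\mathbf{a}=\mathbf{c}=\mathbf{0}$ and $g(0)=0$ and both sides of~\eqref{eq:ac} are $-\infty$, so the bound holds trivially. The one place demanding care is the archimedean reverse estimate $\|\mathbf{x}\|_v\le 2\rho_v(\mathbf{x})$ and the resulting bookkeeping of constants, but there is no real obstruction.
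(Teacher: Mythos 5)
Your proof is correct and follows essentially the same route as the paper: both reduce the lemma to the standard two-sided comparison between $\|\mathbf{x}\|_v$ and its elementary symmetric functions, combined with the coefficient identity $(m-k)e_k(\mathbf{a})=m\,e_k(\mathbf{c})$ relating $g$ and $g'$ — you merely package the root--coefficient comparison through the intermediary $\rho_v$ rather than substituting the roots directly into $g$ and $g'$. One caveat you share with the paper's own argument: at a $p$-adic place with $p\le m$ in characteristic zero one has $|m/(m-k)|_v\ne 1$ in general, so $C_{1,v}=0$ is not actually achieved there; this is a harmless imprecision that the paper absorbs later into the constant $C_{3,v}$.
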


\begin{proof}
If $\sigma_i$ is the basic symmetric polynomial of degree $i$ in the relevant number of variables, then
\[g(z)=z^m-\sigma_1(\mathbf{a})z^{m-1}+...\pm \sigma_m(\mathbf{a}),\]
and so
\[g'(z)=mz^{m-1}-(m-1)\sigma_1(\mathbf{a})z^{m-2}+\cdots \mp \sigma_{m-1}(\mathbf{a}).\]
In the non-archimedean case, $g'(c_i)=0$ implies
\[|mc_i^{m-1}|_v\leq |(m-j)\sigma_j(\mathbf{a})c_i^{m-j-1}|_v\leq |c_i|_v^{m-j-1}\|\mathbf{a}\|_v^j,\]
for some $j$,
whence $|c_i|_v\leq \|\mathbf{a}\|_v|m^{-1}|_v$. Since
\[|g(0)|_v=|a_1\cdots a_m|_v\leq \|\mathbf{a}\|_v^m,\]
we have one direction of our bound, and the archimedean case is similar, using
\[|\sigma_j(\mathbf{a})|_v\leq \binom{m}{j}\|\mathbf{a}\|_v^j.\]

In the other direction, we may use a similar argument applying $g(a_i)=0$ to the expression
\[g(z)=z^m-\frac{1}{m-1}\sigma_1(\mathbf{c})z^{m-1}+\cdots \mp \sigma_{m-1}(\mathbf{c})z+g(0).\]
\end{proof}

Now, for any polynomial $f(z)\in K[z]$ of degree at least 2, set
\[G_{f, v}(z)=\lim_{k\to\infty}\frac{\log^+|f^k(z)|_v}{\deg(f)^k},\]
noting that $G_{f, v}(f(z))=\deg(f)G_{f, v}(z)$,
and put
\[\lambda_{\mathrm{crit}, v}(f)=\max_{f'(c)=0}\{G_{f, v}(c)\}.\]

\begin{lemma}
Set $f(z)=g(z^d)$, and suppose that 
\begin{equation}\label{eq:basin}d\log|z|_v>\log^+\|\mathbf{a}\|_v+C_{2, v},\end{equation} where $C_{2, v}=\frac{m}{m-1}\log 2$ for $v$ archimedean, and $C_{2, v}=0$ otherwise. Then
\[G_{f, v}(z)=\log|z|_v+\epsilon_v(z),\]
where $\epsilon_v(z)=0$ for non-archimedean $v$, and \[-\frac{m}{dm-1}\log 2\leq \epsilon_v(z)\leq \frac{m}{dm-1}\log(3/2)\] for $v$ archimedean.	
\end{lemma}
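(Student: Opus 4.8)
The plan is to split into the non-archimedean and archimedean cases, the former being essentially immediate and the latter reducing to a telescoping estimate whose constants are calibrated exactly by the hypothesis.

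\textbf{Non-archimedean case.} Here $C_{2,v}=0$, so \eqref{eq:basin} reads $d\log|z|_v>\log^+\|\mathbf{a}\|_v$; in particular $|z|_v>1$ and $|z^d|_v>\|\mathbf{a}\|_v$. Using the expansion $g(w)=w^m-\sigma_1(\mathbf{a})w^{m-1}+\cdots\pm\sigma_m(\mathbf{a})$, every non-leading term has $v$-adic absolute value at most $\|\mathbf{a}\|_v^{j}|w|_v^{m-j}<|w|_v^m$ whenever $|w|_v>\|\mathbf{a}\|_v$, so the ultrametric inequality gives $|g(w)|_v=|w|_v^m$ there. Applying this with $w=z^d$ yields $|f(z)|_v=|z|_v^{dm}>|z|_v>1$, so \eqref{eq:basin} is inherited by $f(z)$, and inductively $|f^k(z)|_v=|z|_v^{(dm)^k}$ for all $k$. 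Dividing $\log^+|f^k(z)|_v=(dm)^k\log|z|_v$ by $(dm)^k$ and letting $k\to\infty$ gives $G_{f,v}(z)=\log|z|_v$, i.e.\ $\epsilon_v(z)=0$.

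\textbf{Archimedean case.} Write $\delta_w=\|\mathbf{a}\|_v/|w^d|_v$. The first step is the elementary observation that the hypothesis $d\log|w|_v>\log^+\|\mathbf{a}\|_v+\frac{m}{m-1}\log 2$ forces $|w^d|_v>2^{m/(m-1)}\max\{1,\|\mathbf{a}\|_v\}\geq 2\|\mathbf{a}\|_v$, hence $\delta_w\leq\frac12$ and $|w|_v>1$. Factoring $g(W)=\prod_i(W-a_i)$ and using $|W|_v-\|\mathbf{a}\|_v\leq|W-a_i|_v\leq|W|_v+\|\mathbf{a}\|_v$ with $W=w^d$ gives
\[(1-\delta_w)^m|w|_v^{dm}\leq|f(w)|_v\leq(1+\delta_w)^m|w|_v^{dm}.\]
Since $\delta_w\leq\frac12$ the left inequality yields $|f(w)|_v\geq 2^{-m}|w|_v^{dm}>|w|_v$, using $|w|_v^{dm-1}\geq|w|_v^{d(m-1)}>2^m$, which follows from $|w|_v^d>2^{m/(m-1)}$. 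Consequently the forward orbit of $z$ has strictly increasing absolute values, stays in the region where \eqref{eq:basin} applies, and satisfies $|f^k(z)|_v>1$ and $\delta_{f^k(z)}\leq\frac12$ for every $k\geq0$.

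\textbf{Telescoping and conclusion.} As all $|f^k(z)|_v>1$, we may drop the superscript $+$ and write
\[G_{f,v}(z)-\log|z|_v=\sum_{k=0}^\infty\frac{1}{(dm)^{k+1}}\Big(\log|f^{k+1}(z)|_v-dm\log|f^k(z)|_v\Big).\]
Applying the two-sided bound above with $w=f^k(z)$, so that $\log|f(w)|_v-dm\log|w|_v=\log|g(w^d)|_v-m\log|w^d|_v$ lies between $m\log(1-\delta_w)$ and $m\log(1+\delta_w)$, and hence between $m\log\frac12$ and $m\log\frac32$, each summand lies between $(dm)^{-(k+1)}m\log\frac12$ and $(dm)^{-(k+1)}m\log\frac32$. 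Summing the geometric series $\sum_{k\geq0}(dm)^{-(k+1)}=(dm-1)^{-1}$ gives $-\frac{m}{dm-1}\log 2\leq\epsilon_v(z)\leq\frac{m}{dm-1}\log(3/2)$.

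\textbf{Main obstacle.} The one point needing genuine care is the persistence claim $|f(w)|_v>|w|_v$, which is what licenses the telescoping; the rest is bookkeeping. The specific constants in the statement are forced by this mechanism: the term $\frac{m}{m-1}\log 2$ in \eqref{eq:basin} is precisely what guarantees $\delta_w\leq\frac12$, hence $1+\delta_w\leq\frac32$ and $1-\delta_w\geq\frac12$, which in turn produce the factors $\log(3/2)$ and $\log 2$ in the bounds on $\epsilon_v$.
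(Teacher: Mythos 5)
Your proof is correct and takes essentially the same approach as the paper: the two-sided estimate $\tfrac12|w|_v^{d}\le|w^d-a_i|_v\le\tfrac32|w|_v^{d}$ (an equality in the non-archimedean case), the verification that the forward orbit stays in the basin where the hypothesis applies, and the geometric series $\sum_{k\ge0}(dm)^{-(k+1)}=(dm-1)^{-1}$ are exactly the ingredients of the paper's argument, merely with the telescoping written out explicitly.
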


\begin{proof}
First consider $v$ non-archimedean. Then $\log|z|_v^d>\log^+\|\mathbf{a}\|_v$ implies $|z^d-a_i|_v=|z|_v^d$ for all $i$, whence
\[\log|f(z)|_v=\deg(f)\log|z|_v\geq\log|z|_v^d>\log^+\|\mathbf{a}\|_v.\] Iterating gives the desired result.

For an archimedean $v$, \[\log|z|_v^d>\log^+\|\mathbf{a}\|_v+\frac{m}{m-1}\log 2>\log^+\|\mathbf{a}\|_v+\log 2\] gives $|z^d-a_i|_v>|z|_v^d/2$, whence
\begin{align*}
\log |f(z)|_v&\geq \deg(f)\log|z|_v-m\log 2\\
&\geq d\log|z|_v+(m-1)d\log|z|_v-m\log 2\\
&\geq d\log|z|_v+(m-1)\log^+\|\mathbf{a}\|_v\\
&\geq d\log|z|_v\geq\log\|\mathbf{a}\|_v+C_{2, v},
\end{align*}
and so iterating gives
\[G_{f, v}(z)\geq \log|z|_v-\frac{m}{dm-1}\log 2.\]
The other direction is obtained using the conclusion $|z^d-a_i|_v\leq \frac{3}{2}|z|_v^d$ from~\eqref{eq:basin}.
\end{proof}

We now prove the key lemma.

\begin{lemma}\label{lem:mainbound}
We have
\[\lambda_{\mathrm{crit}, v}(g(z^d))\geq \frac{1}{d}\Big(\log^+\|\mathbf{a}\|_v-C_{3, v}\Big),\]
where $C_{3, v}$ depends only on $m$ and the absolute value, and can be taken to be 0 if $v$ is non-archimedean, and not $p$-adic for any $p\leq m$.	
\end{lemma}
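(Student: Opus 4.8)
The plan is to locate a critical point of $f(z) = g(z^d)$ that is large in the $v$-adic sense, and then apply Lemma 2 to conclude that its Green's function value is correspondingly large. First I would compute $f'(z) = d z^{d-1} g'(z^d)$, so the critical points of $f$ are $z = 0$ (with multiplicity $d-1$) together with the $d$-th roots of the critical points $c_1, \dots, c_{m-1}$ of $g$. The value $\lambda_{\mathrm{crit},v}(f)$ is the max of $G_{f,v}$ over all these points, so it suffices to exhibit one critical point $z_0$ with $d\log|z_0|_v$ large enough to be in the regime covered by Lemma 2.

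The natural candidate is a $d$-th root of the critical point $c_i$ of $g$ of largest absolute value, so $|z_0|_v = |c_i|_v^{1/d}$ with $|c_i|_v = \|\mathbf c\|_v$. However, $\|\mathbf c\|_v$ alone need not dominate $\|\mathbf a\|_v$ — this is exactly why Lemma 1 is phrased with the extra term $g(0)^{1/m}$. So I would instead split into two cases according to whether $\|\mathbf c\|_v$ or $|g(0)|_v^{1/m}$ is the larger of the two. In the first case, $z_0$ as above already satisfies $d\log|z_0|_v = \log\|\mathbf c\|_v \geq \log\|\mathbf a\|_v - C_{1,v}$ by Lemma 1, and if this exceeds the threshold $\log^+\|\mathbf a\|_v + C_{2,v}$ of Lemma 2 (adjusting $C_{3,v}$ to absorb the constants), then $G_{f,v}(z_0) = \log|z_0|_v + \epsilon_v(z_0) \geq \frac{1}{d}(\log^+\|\mathbf a\|_v - C_{3,v})$ as desired. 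In the second case, where $|g(0)|_v^{1/m}$ dominates, I would instead track the orbit of the critical point $z = 0$: note $f(0) = g(0)$, and $|g(0)|_v$ is large (comparable to $\|\mathbf a\|_v^{m}$ up to the case hypothesis and Lemma 1), so $|f(0)|_v$ lands in the escaping region of Lemma 2, giving $G_{f,v}(0) = \frac{1}{d}G_{f,v}(f(0)) = \frac{1}{d}\big(\log|g(0)|_v + \epsilon_v(g(0))\big)$, which is again $\geq \frac{1}{d}(\log^+\|\mathbf a\|_v - C_{3,v})$.

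The only remaining subtlety is the boundary region: if $\|\mathbf a\|_v$ is small — comparable to $1$ — then neither $z_0$ nor $g(0)$ need exceed the Lemma 2 threshold, but in that case $\log^+\|\mathbf a\|_v = 0$ (or bounded), and the claimed lower bound $\frac{1}{d}(\log^+\|\mathbf a\|_v - C_{3,v})$ can be made vacuous or negative by choosing $C_{3,v}$ large enough, since $G_{f,v} \geq 0$ always. So I would handle this by simply choosing $C_{3,v}$ to be the maximum of the constants forced in the two main cases and the threshold constant from Lemma 2, so that whenever $\log^+\|\mathbf a\|_v - C_{3,v} > 0$ we are automatically in a regime where one of the two constructions applies. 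I expect the main obstacle to be bookkeeping: verifying that the constants $C_{1,v}, C_{2,v}$ and the error terms $\epsilon_v$ can all be absorbed into a single $C_{3,v}$ depending only on $m$, and confirming that the "not $p$-adic for $p \leq m$" hypothesis is exactly what is needed to kill $C_{1,v}$ (which carries a factor $|m^{-1}|_v$) and make it vanish in the non-archimedean case.
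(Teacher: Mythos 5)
Your second case (where $|g(0)|_v^{1/m}$ dominates) is essentially right and matches half of the paper's argument: $f(0)=g(0)$ has modulus roughly $\|\mathbf{a}\|_v^m$, so it lies in the escape region of the second lemma of Section 1, and the functional equation gives $G_{f,v}(0)=\frac{1}{dm}G_{f,v}(g(0))\approx\frac{1}{dm}\cdot m\log\|\mathbf{a}\|_v=\frac{1}{d}\log\|\mathbf{a}\|_v$ (note the factor is $1/\deg(f)=1/(dm)$, not $1/d$; it is the exponent $m$ on $|g(0)|_v$ that rescues the $1/d$ in the final bound). The genuine gap is in your first case. When $\|\mathbf{c}\|_v$ is the dominant term, your candidate $z_0=c_i^{1/d}$ satisfies $d\log|z_0|_v=\log\|\mathbf{c}\|_v$, and the estimates in the proof of the first lemma show $\log\|\mathbf{c}\|_v\leq\log\|\mathbf{a}\|_v+O_m(1)$; so $z_0$ essentially \emph{never} clears the threshold $\log^+\|\mathbf{a}\|_v+C_{2,v}$, no matter how large $\|\mathbf{a}\|_v$ is. Enlarging $C_{3,v}$ cannot repair this: the deficit $\log\|\mathbf{a}\|_v+C_{2,v}-\log\|\mathbf{c}\|_v$ is an additive gap that persists as $\|\mathbf{a}\|_v\to\infty$ (for $g(z)=z^3-3R^2z$ one has $\|\mathbf{c}\|_v=R<\sqrt{3}R=\|\mathbf{a}\|_v$ for every $R$). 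More fundamentally, a critical point lying just inside the escape radius can perfectly well have $G_{f,v}=0$, so no estimate on $|z_0|_v$ alone can yield a lower bound on $G_{f,v}(z_0)$: you must control the forward orbit, i.e., a critical \emph{value}.

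This is exactly where the paper brings in its essential input, the main theorem of~\cite{pcf} (the $d=1$ case of the result): writing $\psi(z)=\frac{1}{m}(g(z)-g(0))$, some critical value satisfies $\log|\psi(c_j)|_v\geq m\log\|\mathbf{c}\|_v-C_{4,v}$. In the regime where $g(0)$ is not large (so that, by the first lemma, $\|\mathbf{c}\|_v\approx\|\mathbf{a}\|_v$), this forces the branch point $g(c_j)$ of $f$ to have modulus roughly $\|\mathbf{c}\|_v^m\approx\|\mathbf{a}\|_v^m$ unless $\|\mathbf{a}\|_v$ is itself bounded by a constant; the critical point $c_j^{1/d}$ then escapes with $G_{f,v}\approx\frac{1}{dm}\cdot m\log\|\mathbf{a}\|_v=\frac{1}{d}\log\|\mathbf{a}\|_v$, exactly parallel to your $g(0)$ case. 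Without this lower bound on a critical value of $g$ (or an equivalent substitute), the case $\|\mathbf{c}\|_v\approx\|\mathbf{a}\|_v$ with $|g(0)|_v$ small cannot be closed, and that case is the heart of the lemma.
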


\begin{proof}
Note first, as a general comment, that $\lambda_{\mathrm{crit}, v}(g(z^d))\geq 0$, and so the desired result holds trivially if ever $\log^+\|\mathbf{a}\|_v\leq C_{3, v}$. We will assume that \begin{equation}\label{eq:somestuff}\log^+\|\mathbf{a}\|_v=\log\|\mathbf{a}\|_v> \frac{m}{m-1}C_{1, v}+\frac{1}{m}C_{2, v}\geq 0,\end{equation}
since requiring $C_{3, v}\geq \frac{m}{m-1}C_{1, v}+\frac{1}{m}C_{2, v}$ will mean that our claim is trivial otherwise.
	
 Write $f(z)=g(z^d)$, and note by the chain rule that the branch points of $f$ (the images of the critical points) are exactly $g(0)$ and the branch points of $g$. 
   Let $\beta$ be any branch point of $f$ and  suppose  for now that that \[\log|\beta|_v>\log\|\mathbf{a}\|_v+C_{2, v},\] so that $z=\beta$ certainly satisfies the hypothesis~\eqref{eq:basin}. Then we have
 \begin{equation}\label{eq:gzero}
 	\lambda_{\mathrm{crit}, v}(f)\geq \frac{1}{d}G_{f, v}(\beta)\geq \frac{1}{d}\Big(\log|\beta|_v+\epsilon_v(z)\Big)\geq \frac{1}{d}\Big(\log\|\mathbf{a}\|_v+C_{2, v}+\epsilon_v(z)\Big),
 \end{equation}
which implies the claimed bound, as long as we eventually take $C_{3, v}>\frac{m}{2m-1}\log 2$.

  So we are done unless \begin{equation}\label{eq:betabound}\log|\beta|_v\leq \log\|\mathbf{a}\|_v+C_{2, v}\end{equation} for every branch point $\beta$ of $f$. In particular, since $g(0)$ is a branch point, and \[m\log\|\mathbf{a}\|_v-mC_{1, v}>\log\|\mathbf{a}\|_v+C_{2, v},\] from~\eqref{eq:somestuff}, we have
  $\log|g(0)|< m\log\|\mathbf{a}\|-mC_{1,v}$, whence from~\eqref{eq:ac}
 \[\Big|\log\|\mathbf{a}\|-\log\|\mathbf{c}\|\Big|\leq C_{1, v}.\]

% Now, if we have $\beta=g(0)$, Lemma~?? implies that
% \[m\log\|\mathbf{a}\|-const \leq \log|g(0)|=\log|\beta|\leq \log\|\mathbf{a}\|+C_{2, v},\]
% whereupon $\log\|\mathbf{a}\|\leq \frac{1}{m-1}(C_{2, v}+const)$. Choosing our constants properly, our desired bound becomes trivial.
%
%
%So we may assume that $\beta=g(c_i)$ for some $i$, with $|g(0)|<|\beta|$. 
Now write $\psi(z)=\frac{1}{m}(g(z)-g(0))$. By the main result of~\cite{pcf}, there is a constant $C_{4, v}$, depending just on the absolute value (and to be taken as 0 if $v$ is non-archimedean and not $p$-adic for $p\leq m$; see e.g., \cite{pern}, and the comment therein that arguments pursued over $\QQ$ in~\cite{pcf} really work over $\ZZ[1/2, 1/3, ..., 1/m]$), such that
 \[\log|\psi(c_j)|\geq m\log\|\mathbf{c}\|-C_{4, v}\]
 for some $j$, noting that the critical points of $\psi$ are also the $c_i$. Thus we have
 \begin{align*}
m\log\|\mathbf{a}\|-mC_{1, v}-C_{4, v}-\log|m|&\leq m\log\|\mathbf{c}\|-C_{4, v}-\log|m|\\
&\leq \log|g(c_j)-g(0)|\\
&\leq \log\max\{|g(c_j)|, |g(0)|\}\\
&\leq \log\|\mathbf{a}\|+C_{2, v},
 \end{align*}
by~\eqref{eq:betabound}, which gives \begin{equation}\label{eq:4consts}
\log\|\mathbf{a}\|\leq \frac{1}{m-1}\Big(mC_{1, v}+C_{2, v}+C_{4, v}+\log|m|\Big).	
 \end{equation}
 Again our bound is trivial as long as we take $C_{3, v}$ to be at least as large as the right-hand-side of~\eqref{eq:4consts}.
\end{proof}

\section{Proofs of Theorems~\ref{th:main} and Theorem~\ref{th:rigid}}

It is now relatively easy to prove Theorem~\ref{th:main} from Lemma~\ref{lem:mainbound}. 
\begin{proof}[Proof of Theorem~\ref{th:main}]
	Recall that for a tuple $\mathbf{a}\in K^m$, we have
\[h(\mathbf{a})=\sum_{v\in M_K} \frac{[K_v:\QQ_v]}{[K:\QQ]}\log^+\|\mathbf{a}\|_v.\]
We define, as is now standard for polynomials,
\[\hat{h}_{\mathrm{crit}}(f)=\sum_{v\in M_K}\frac{[K_v:\QQ_v]}{[K:\QQ]}\lambda_{\mathrm{crit}, v}(f).\]
Summing Lemma~\ref{lem:mainbound} over all places, we have
\[\hat{h}_{\mathrm{crit}}(g(z^d))\geq \frac{1}{d}\Big(h(\mathbf{a})-C\Big),\]
where $C=\sum_{v\in M_K}\frac{[K_v:\QQ_v]}{[K:\QQ]}C_{3, v}$. Using the estimate
\[h(g)\leq \deg(g)(h(\mathbf{a})+\log 2)\]
(obtained by bounding the coefficients of $g$ as symmetric polynomials in the roots) completes the proof of Theorem~\ref{th:main}.
\end{proof}

\begin{proof}[Proof of Theorem~\ref{th:rigid}]
Recall (e.g., \cite[Lemma~10]{pern}) that if $V$ is any quasi-projective variety over an algebraically closed field $k$, then the function field $k(V)$ admits a set of absolute values satisfying the product formula, and such that the points of height zero are precisely the points with constant coordinates. Now, the error terms in Lemma~\ref{lem:mainbound} vanish for each of these absolute values, since they are neither archimedean nor $p$-adic, and so we have
\[\lambda_{\mathrm{crit}, v}(g(z^d))\geq \frac{1}{d\deg(g)}\log^+\|\mathbf{a}\|_v\]
for all places. So if $g(z^d)$ is PCF, we must have $\|\mathbf{a}\|_v\leq 1$ for all $v$, and so the $a_i$ are constant.
\end{proof}

We end with a remark, namely: 
Let $V$ be a quasi-projective variety over $k=\overline{\mathbb{F}_p}$, and suppose that $f(z)\in k[V, z]$ has degree $d\geq 2$ in $z$. Then $f$ is PCF if and only if there is uniform bound on the size of the postcritical set of the specialization $f_t$, for $t\in V(k)$. 

In one direction this is clear, since if the postcritical set on the generic fibre contains at most $N$ elements, the same is true of every specialization. On the other hand, for every $N$, there is a Zariski closed subset of $V$ corresponding to specializations in which the postcritical set contains at most $N$ elements. If the specializations have uniformly bounded postcritical set sizes, then since $\overline{\mathbb{F}_p}$ points are Zariski dense in $V$, it must be that some irreducible component of this closed subset is all of $V$. But, marking the critical points and describing their orbits partitions these components, so all orbits must have the same combinatorics on $V$.

\end{document}